\newtheorem{theo}{Theorem}[section]
\newtheorem{prop}[theo]{Proposition}
\newtheorem{lem}[theo]{Lemma}
\newtheorem{coro}[theo]{Corollary}
\def\equat{\refstepcounter{theo}$$~}
\def\endequat{\leqno{\boldsymbol{(\arabic{section}.\arabic{theo})}}~$$}
    \def\FM{{\mathbb{F}}}
    \def\NM{{\mathbb{N}}}
    \def\ZM{{\mathbb{Z}}}
  \def\ab{{\mathbf a}}  
    \def\BC{{\mathcal{B}}}
    \def\CC{{\mathcal{C}}}
    \def\DC{{\mathcal{D}}}
    \def\HC{{\mathcal{H}}}
    \def\LC{{\mathcal{L}}}
    \def\OC{{\mathcal{O}}}
    \def\RC{{\mathcal{R}}}
\def\a{\alpha}
\def\g{\gamma}
\def\G{\Gamma}
\def\D{\Delta}
\def\ph{\varphi}
\def\l{\lambda}
\def\o{\omega}
\def\s{\sigma}
\def\t{\tau}
\DeclareMathOperator{\Ker}{{\mathrm{Ker}}}
\DeclareMathOperator{\Tr}{{\mathrm{Tr}}}
\def\to{\rightarrow}
\def\longto{\longrightarrow}
\def\fonction#1#2#3#4#5{\begin{array}{rccc}
{#1} : & {#2} & \longto & {#3} \\
& {#4} & \longmapsto & {#5} 
\end{array}}
\def\DS{\displaystyle}
\def\SS{\scriptstyle}
\def\SSS{\scriptscriptstyle}
\def\finl{~$\SS \square$}
\def\lexp#1#2{\kern\scriptspace\vphantom{#2}^{#1}\kern-\scriptspace#2}
\def\le{\hspace{0.1em}\mathop{\leqslant}\nolimits\hspace{0.1em}}
\mathchardef\inferieur="321E
\mathchardef\superieur="321F
\def\eqna{\begin{eqnarray*}}
\def\endeqna{\end{eqnarray*}}
\def\itemth#1{\item[${\mathrm{(#1)}}$]}
\def\gfp{{\FM_{\! p}}}
\long\def\@car#1#2\@nil{#1}
\long\def\@first#1#2{#1}
\long\def\@second#1#2{#2}
\long\def\ifempty#1{\expandafter\ifx\@car#1@\@nil @\@empty
  \expandafter\@first\else\expandafter\@second\fi}
\newcounter{soussection}[section]
\def\soussection#1{\refstepcounter{soussection}
    \noindent{\bf \arabic{section}.\Alph{soussection}. #1.}
    \addcontentsline{toc}{section}{\quad 
        {\arabic{section}.\Alph{soussection}. #1.}}} 
\DeclareMathOperator{\val}{{\mathrm{val}}}
\DeclareMathOperator{\can}{{\mathrm{can}}}
\DeclareMathOperator{\Br}{{\mathrm{Br}}}
\DeclareMathOperator{\br}{{\mathrm{br}}}
\def\smallgroup{{\SSS{G}}}
\begin{document}

\baselineskip=16pt

\title{Automorphisms of Coxeter groups and Lusztig's \\
conjectures for Hecke algebras \\
with unequal parameters}

\author{C\'edric Bonnaf\'e}
\address{\noindent 
Labo. de Math. de Besan\c{c}on (CNRS: UMR 6623), 
Universit\'e de Franche-Comt\'e, 16 Route de Gray, 25030 Besan\c{c}on
Cedex, France} 

\makeatletter
\email{cedric.bonnafe@univ-fcomte.fr}

\makeatother

\subjclass{According to the 2000 classification:
Primary 20C08; Secondary 20C15}

\thanks{The author is partly supported by the ANR (Project 
No JC07-192339)}

\date{\today}

\begin{abstract} 
Let $(W,S)$ be a Coxeter system, let $G$ be a finite solvable group 
of automorphisms of $(W,S)$ and let $\ph$ be a weight function 
which is invariant under $G$. Let $\ph_G$ denote the weight function 
on $W^G$ obtained by restriction from $\ph$. The aim of this paper 
is to compare the $\ab$-function, the set of Duflo involutions and 
the Kazhdan-Lusztig cells associated with $(W,\ph)$ 
and with $(W^G,\ph_G)$, provided that Lusztig's Conjectures hold. 
\end{abstract}

\maketitle

\pagestyle{myheadings}

\markboth{\sc C. Bonnaf\'e}{\sc Automorphisms of Coxeter groups and 
Lusztig's conjectures}

\bigskip

\tableofcontents

\bigskip

Let $(W,S)$ be a Coxeter system, with $S$ finite, let $\G$ be a totally 
ordered abelian group and let $\ph : W \to \G$ be a {\it weight function} 
such that $\ph(s) > 0$ for all $s \in S$.

Let $G$ be a group of automorphisms of $W$ stabilizing $S$ and $\ph$. 
We denote by $\ph_G$ the restriction of $\ph$ to the fixed points 
subgroup $W^G$. If $\o \in S/G$ (the orbit set) is such that $W_\o$ 
($=\langle \o\rangle$) is finite, we denote by $s_\o$ the longest element 
of the standard parabolic subgroup $W_\o$ and we set 
$S_G=\{s_\o~|~\o \in S/G$ and $W_\o$ is finite$\}$. 
Then it is well-known that $(W^G,S_G)$ is a Coxeter system and 
that $\ph_G : W^G \to \G$ is a weight function (such that 
$\ph_G(s_\o) > 0$ for all $\o \in S/G$).

With the datum $(W,S,\G,\ph)$ are associated a Hecke algebra 
$\HC(W,S,\G,\ph)$ over the ring $\ZM[\G]$, 
a Kazhdan-Lusztig basis $(C_w)_{w \in W}$ 
of $\HC(W,S,\G,\ph)$, equivalence relations $\sim_\LC$, $\sim_\RC$ and 
$\sim_{\LC\RC}$ and two functions $\ab : W \to \G$ and $\D : W \to \G$ 
(see \cite{lusztig}). We set 
$\DC=\{w \in W~|~\ab(w) =\D(w)\}$. With the datum $(W^G,S_G,\G,\ph_G)$, 
we associate similarly 
$\sim_\LC^\smallgroup$, $\sim_\RC^\smallgroup$, 
$\sim_{\LC\RC}^\smallgroup$, $\ab_G$, 
$\D_G$ and $\DC_G$. The main result of this paper is the following:

\bigskip

\noindent{\bf Theorem A.} 
{\it Assume that $G$ is a finite solvable group and that 
Lusztig's conjectures $(P_1)$, $(P_2)$, $(P_3)$, $(P_4)$ 
in \cite[Chapter 14]{lusztig} hold for the datum 
$(W^{H},S_{H},\G,\ph_{H})$ 
for all subgroups $H$ of $G$. Let $x$, $y \in W^G$. Then:
\begin{itemize}
\itemth{a} $\ab_G(x)=\ab(x)$.

\itemth{b} $\DC_G = \DC \cap W^G$.

\itemth{c} Assume moreover that Lusztig's Conjecture $(P_{13})$ in 
\cite[Chapter 14]{lusztig} hold for the datum $(W^{H},S_{H},\G,\ph_{H})$ 
for all subgroups $H$ of $G$. If $? \in \{\LC,\RC\}$, then $x \sim_? y$ if 
and only if $x \sim_?^\smallgroup y$.

\itemth{d} Assume moreover that Lusztig's Conjectures $(P_9)$ and $(P_{13})$ in 
\cite[Chapter 14]{lusztig} hold for the datum $(W^{H},S_{H},\G,\ph_{H})$ 
for all subgroups $H$ of $G$. Then $x \sim_{\LC\RC} y$ if 
and only if $x \sim_{\LC\RC}^\smallgroup y$.
\end{itemize}}

\bigskip

\noindent{\sc Remark - } If $G$ is not solvable and if we assume moreover 
that Lusztig's conjecture $(P_{12})$ in \cite[Chapter 14]{lusztig} holds, 
then the statements (a), (b) and (c) of Theorem A hold. It is probable 
that (d) also holds, but the proof should rely on a really different argument 
than the one presented here. Indeed, using $(P_{12})$ and 
a theorem of Meinolf Geck \cite{geck induction}, one can reduce the problem 
to the case where $W_\o$ is finite for all $G$-orbits $\o$ in $S$. 
Then, since the automorphism groups of irreducible finite Coxeter systems are solvable, 
one can assume that $G$ is solvable and apply Theorem A above.\finl

\bigskip

The proof of this Theorem makes essential use of reduction modulo $p$. 
Indeed, an easy induction argument reduces immediately the problem 
to the case where $G$ is a $p$-group for some prime number $p$. 
The main ingredient is then the following: the natural stupid map 
$\HC(W^G,S_G,\G,\ph_G) \to \HC(W,S,\G,\ph)^G$ 
is not a morphism of algebras in general. 
However, if we denote by $\Br_G(\HC(W,S,\G,\ph))$ the quotient 
of $\HC(W,S,\G,\ph)^G$ by the two-sided ideal 
$\sum_{H < G} \Tr_{H}^G(\HC(W,S,\G,\ph)^{H})$ 
({\it Brauer's quotient}, see for instance \cite[Page 91]{thevenaz}), then:

\bigskip

\noindent{\bf Proposition B.}
{\it Assume that $G$ is a finite $p$-group. Then the natural linear map 
$\HC(W^G,S_G,\G,\ph_G) \to \Br_G(\HC(W,S,\G,\ph)^G)$ 
is a morphism of algebras whose kernel is generated by $p$. 
Moreover, it preserves the Kazhdan-Lusztig basis.}

\bigskip

\noindent{\bf Acknowledgements.} Part of this work was done while the 
author stayed at the MSRI during the winter 2008. The author wishes to 
thank the Institute for its hospitality and the organizers of the two 
programs for their invitation.

\bigskip

\section{The set-up}

\medskip

\soussection{The group ${\boldsymbol{(W,S)}}$} 
Let $(W,S)$ be a Coxeter system (with $S$ finite), let $\ell : W \to \NM$ 
denote the length function, let $\G$ be a totally ordered abelian 
group and let $\ph : W \to \G$ be a {\it weight function} 
\cite[\S 3.1]{lusztig} that is, a map such that $\ph(ww')=\ph(w)+\ph(w')$ 
whenever $\ell(ww')=\ell(w)+\ell(w')$. 

Let $A$ be the group algebra $\ZM[\G]$: we will use an exponential notation 
for $A$, namely $A=\DS{\mathop{\oplus}_{\g \in \G}} \ZM e^\g$, where 
$e^\g \cdot e^{\g'}=e^{\g+\g'}$ for all $\g$, $\g' \in \G$. 
If $a=\sum_{\g \in \G} a_\g e^\g \in A$, 
we denote by $\deg a$ (resp. $\val a$) the {\it degree} 
(resp. the {\it valuation}) of $a$, that is, the element $\g$ of $\G$ 
such that $a_\g\neq 0$ and which is maximal (resp. minimal) for this 
condition (by convention, $\deg 0 = - \infty$ and $\val 0 = + \infty$).

We shall 
denote by $\HC$ the Hecke algebra associated with the datum $(W,S,\G,\ph)$. 
It is a free $A$-module, with standard basis $(T_w)_{w \in W}$, and 
the multiplication is entirely determined by the following rules:
$$\begin{cases}
T_w T_{w'} = T_{ww'} & \text{if $\ell(ww')=\ell(w)+\ell(w')$;}\\
(T_s - e^{\ph(s)})(T_s+e^{-\ph(s)})=0 & \text{if $s \in S$.}
\end{cases}$$
Note that this implies that $T_w$ is invertible in $\HC$ for all 
$w \in W$. 
This algebra is endowed with an $A$-anti-linear involution $\bar{~} : \HC \to \HC$ 
which is determined by the following properties:
$$\begin{cases}
\overline{e^\g}=e^{-\g} & \text{if $\g \in \G$,}\\
\overline{T}_w=T_{w^{-1}}^{-1} & \text{if $w \in W$.}
\end{cases}$$
By \cite[Theorem 5.2]{lusztig}, 
there exists a unique element $C_w \in \HC$ such that
$$\begin{cases}
\overline{C}_w=C_w, & \\
C_w \equiv T_w \mod \HC_{<0},
\end{cases}$$
where $\HC_{<0} = \DS{\mathop{\oplus}_{w \in W}} A_{<0} T_w$, 
and where $A_{<0} = \DS{\mathop{\oplus}_{\g < 0}} \ZM e^\g$.

Let $\t : \HC \to A$ be the unique $A$-linear map such that 
$$\t(T_w)=\begin{cases} 1 & \text{if $w=1$,} \\ 
0 & \text{otherwise.}\end{cases}$$
If $w \in W$, we set 
$$\D(w)=-\deg \t(C_w),$$
and we denote by $n_w$ the coefficient of $e^{-\D(w)}$ 
in $\t(C_w)$. 
Finally, if $x$, $y \in W$, we write
$$C_x C_y = \sum_{z \in W} h_{x,y,z} C_z,$$
where the $h_{x,y,z}$'s are in $A$ and satisfy 
$\overline{h_{x,y,z}}=h_{x,y,z}$. 

\bigskip

\soussection{The group ${\boldsymbol{(W^G,S_G)}}$} 
Let $G$ be a group of automorphisms of $W$ such that, for all 
$\s \in G$, we have 
$$\s(S)=S\quad\text{and}\quad\ph \circ \s = \ph.$$
If $I$ is a subset of $S$, we denote by $W_I$ the (standard 
parabolic) subgroup of $W$ generated by $I$. 
If $\o \in S/G$ is such that $W_\o$ is finite, 
we denote by $s_\o$ the longest element of $W_\o$. We denote 
by $S_G$ the set of $s_\o$, where $\o$ runs over the set of 
$G$-orbits in $S$ such that $W_\o$ is finite. Recall the following proposition 
\cite[Corollary 3.5 and Proof of Proposition 3.4]{hee}:

\bigskip

\begin{prop}\label{fixes}
$(W^G,S_G)$ is a Coxeter system. Let $\ell_G : W^G \to \NM$ denote 
the corresponding length function and let $x$, $y \in W^G$. Then 
$\ell(xy)=\ell(x)+\ell(y)$ if and only if $\ell_G(xy)=\ell_G(x)+\ell_G(y)$. 
\end{prop}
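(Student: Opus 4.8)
The plan is to reduce the proposition to three facts, each proved or quoted on its own: (1)~$W^G$ is generated by $S_G$; (2)~$(W^G,S_G)$ is a Coxeter system, with $\ell_G$ as its length function; (3)~the restriction $\ell|_{W^G}$ is a weight function on $(W^G,S_G)$, necessarily the one with $(\ell|_{W^G})(s_\o)=\ell(s_\o)$. Given (1)--(3) the equivalence is nearly formal: passing from $\ell_G$-additivity to $\ell$-additivity is just the defining property of the weight function $\ell|_{W^G}$, and the converse rests on the strict positivity $\ell(s_\o)\ge 1$.

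\emph{Generation.} For (1) I would use a descent algorithm. Let $w\in W^G$, $w\neq 1$, and choose $s\in S$ with $\ell(sw)<\ell(w)$. As $\ell$ is $G$-invariant and $w$ is $G$-fixed, $\ell(s'w)<\ell(w)$ for every $s'$ in the orbit $\o=G\cdot s$. Writing $w=uv$ with $u\in W_\o$ and $v$ of minimal length in the left coset $W_\o w$ (so $\ell(w)=\ell(u)+\ell(v)$), this means that every element of $\o$ is a left descent of $u$, which forces $u$ to be the longest element of $W_\o$; in particular $W_\o$ is finite, $s_\o=u\in S_G$, $s_\o w=v$, and $\ell(s_\o w)=\ell(w)-\ell(s_\o)$, so iterating expresses $w$ as a product of elements of $S_G$. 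The same argument run inside the finite Coxeter group $W_\o$ --- whose simple reflections form a single $G$-orbit --- also gives $W_\o^G=\{1,s_\o\}$ whenever $W_\o$ is finite; hence for $w\in W^G$ the component $u$ is $1$ or $s_\o$, and from $s_\o w=(s_\o u)v$ together with $\ell(s_\o u)=\ell(s_\o)-\ell(u)$ one reads off the local identity $\ell(s_\o w)-\ell(w)=\ell(s_\o)-2\ell(u)\in\{+\ell(s_\o),-\ell(s_\o)\}$.

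\emph{Coxeter structure and weight function.} Facts (2) and (3) are the real content; here I would invoke the folding theorem of Hée \cite{hee} (see also \cite{lusztig}). One way to organize the proof of (2) is through the left inversion sets $N(w)=\{r\in R\mid\ell(rw)<\ell(w)\}$, where $R$ denotes the set of reflections of $(W,S)$: for $w\in W^G$ the set $N(w)$ is $G$-stable, and by reading these $G$-stable sets as inversion sets of $w$ for the pair $(W^G,S_G)$ and tracking how they evolve under left multiplication by the $s_\o$ (using the local identity above), one verifies the exchange condition for $W^G$, so that $(W^G,S_G)$ is a Coxeter system with length function $\ell_G$. Fact (3) is then precisely the assertion ``$\ph_G$ is a weight function'' recalled in the introduction, applied to the weight function $\ph=\ell$ of $(W,S)$: $\ell|_{W^G}$ is a weight function on $(W^G,S_G)$, and clearly $(\ell|_{W^G})(s_\o)=\ell(s_\o)\ge 1$.

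\emph{Conclusion.} Let $x,y\in W^G$. If $\ell_G(xy)=\ell_G(x)+\ell_G(y)$, then the concatenation of reduced $S_G$-expressions of $x$ and $y$ is a reduced $S_G$-expression of $xy$, and since $\ell|_{W^G}$ is a weight function this gives $\ell(xy)=\ell(x)+\ell(y)$. Conversely, suppose $\ell_G(xy)<\ell_G(x)+\ell_G(y)$. The same concatenation is then a non-reduced $S_G$-word for $xy$, which by the solution of the word problem for Coxeter groups can be transformed into a reduced $S_G$-expression of $xy$ by braid moves and cancellations $s_\o s_\o\to\varnothing$. A braid move leaves the total weight $\sum_i\ell(s_{\o_i})$ of the word unchanged (for an even bond the multiset of letters does not change; for an odd bond $m^G_{\o,\o'}$ the two reduced $S_G$-words for the longest element of $\langle s_\o,s_{\o'}\rangle$ have equal total weight, because $\ell|_{W^G}$ is a weight function, whence $\ell(s_\o)=\ell(s_{\o'})$), whereas each cancellation strictly decreases it by $2\ell(s_\o)\ge 2$. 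The word must shorten (its length drops from $\ell_G(x)+\ell_G(y)$ to $\ell_G(xy)$), so at least one cancellation occurs; and since $\ell|_{W^G}$ is a weight function, the total weight of a reduced $S_G$-expression of any $z\in W^G$ equals $\ell(z)$. Comparing, $\ell(xy)$ (the total weight of the final word) is strictly less than $\ell(x)+\ell(y)$ (the total weight of the initial concatenation), a contradiction. The main obstacle is the folding theorem, items (2)--(3); once it is available --- and the introduction already grants it --- the descent algorithm, the identity $W_\o^G=\{1,s_\o\}$, and the word-problem argument are routine, which is why the proposition is simply quoted from \cite{hee}.
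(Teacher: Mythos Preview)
Your proposal is correct and, at bottom, takes the same route as the paper: the paper gives no proof of this proposition at all, simply quoting H\'ee \cite[Corollary~3.5 and Proof of Proposition~3.4]{hee}, and you likewise defer the substantive claims (2) and (3) to H\'ee's folding theorem, adding only routine exposition (the descent algorithm for generation, the Tits word-problem argument for the converse implication). Your added details check out; in particular the local identity $\ell(s_\o w)-\ell(w)\in\{\pm\ell(s_\o)\}$ follows as you say from $W_\o^G=\{1,s_\o\}$, and the braid-move step is fine because any weight function forces $\ell(s_\o)=\ell(s_{\o'})$ when the bond is odd.

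One caution on presentation: you describe fact~(3) as ``precisely the assertion `$\ph_G$ is a weight function' recalled in the introduction''. In the paper that assertion (equation~(\ref{phg weight})) is \emph{deduced from} Proposition~\ref{fixes}, so appealing to it here would be circular. You clearly intend to draw (3) directly from H\'ee rather than from the paper's own text, and that is the right thing to do; just make the dependence explicit so the reader does not mistake it for a loop.
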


\bigskip

Let 
$$\fonction{\ph_G}{W^G}{\G}{w}{\ph(w)}$$
denote the restriction of $\ph$ to $W^G$. Then, by Proposition \ref{fixes}, 
\equat\label{phg weight}
\text{\it $\ph_G$ is a weight function.}
\endequat
Therefore, we can 
define $\HC_G$, $\HC_{G,<0}$, $T_w^\smallgroup$, $C_w^\smallgroup$, $\t_G$, 
$\D_G$, $n_z^\smallgroup$ and $h_{x,y,z}^\smallgroup$ with respect 
to $(W^G,S_G,\G,\ph_G)$ in a similar way as $\HC$, $\HC_{<0}$, $T_w$, 
$C_w$, $\t$, $\D$, $n_z$ and $h_{x,y,z}$ were defined with respect 
to $(W,S,\G,\ph)$.

\bigskip

\section{Brauer quotient}

\medskip

\begin{quotation}
\noindent{\bf Hypothesis and notation.} 
{\it From now on, and until the end of section \ref{lc}, 
we fix a prime number $p$ and we assume that $G$ is a finite $p$-group.}
\end{quotation}

\bigskip

\soussection{Definition} 
For all the facts contained in this subsection, the reader may refer 
to \cite[\S 11]{thevenaz}: even though this reference deals 
only with $\OC$-algebras (where $\OC$ is 
a commutative complete local noetherian $\ZM_p$-algebra) which are 
$\OC$-modules of finite type, the proofs 
can be applied almost word by word to our slightly more general situation. 

Let $R$ be a commutative ring and let $M$ be an $RG$-module. 
If $H$ is a subgroup of $G$, we set
$$\fonction{\Tr_H^G}{M^H}{M^G}{m}{\DS{\sum_{\s \in [G/H]} \s(m)}.}$$
Here, $[G/H]$ denotes a set of representatives classes in $G/H$.
We also define
$$\Tr(M)=\sum_{H < G} \Tr_H^G(M^H).$$
This is an $R$-submodule of $M^G$, containing $pM^G$. 
The {\it Brauer quotient} $\Br_G(M)$ is then defined by
$$\Br_G(M)=M^G/\Tr(M)$$
and we denote by $\br_G : M^G \to \Br_G(M)$ the canonical map. 

\bigskip

\begin{lem}\label{base}
Assume that $pR \neq R$ and that $M$ admits an $R$-basis $\BC$ 
which is permuted by the action of $G$. Then $\Br_G(M)$ is a free 
$R/pR$-module with basis $(\br_G(b))_{b \in \BC^G}$. 
\end{lem}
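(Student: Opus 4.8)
The plan is to decompose $M$ according to the $G$-orbits on the basis $\BC$ and thereby reduce the whole computation to the case of a single transitive permutation module, which can then be handled by hand.

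First I would split $\BC$ into its $G$-orbits $\OC \in \BC/G$ and write $M = \bigoplus_{\OC} M_\OC$ with $M_\OC = \bigoplus_{b \in \OC} R b$, each $M_\OC$ being a $G$-submodule. For every subgroup $H$ of $G$ one has $M^H = \bigoplus_\OC M_\OC^H$, and $\Tr_H^G$ carries $M_\OC^H$ into $M_\OC^G$; hence $\Tr(M) = \bigoplus_\OC \Tr(M_\OC)$ inside $M^G = \bigoplus_\OC M_\OC^G$, and therefore $\Br_G(M) = \bigoplus_\OC \Br_G(M_\OC)$. So it suffices to compute $\Br_G(M_\OC)$ for one orbit. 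Fix $b \in \OC$ and let $D = \{\sigma \in G \mid \sigma(b) = b\}$ be its stabilizer, so that $M_\OC$ is the permutation $RG$-module on $G/D$. A short direct computation shows $M_\OC^G = R \beta$, where $\beta = \sum_{b' \in \OC} b'$ is the orbit sum (a $G$-fixed vector must have constant coefficients along the orbit); and the key identity is $\beta = \Tr_D^G(b)$, which follows from the orbit--stabilizer bijection between $G/D$ and $\OC$ together with $b \in M_\OC^D$.

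Next I would split into two cases according to whether $\OC \cap \BC^G$ is empty. If $D \neq G$, i.e. $|\OC| > 1$: then $\beta = \Tr_D^G(b) \in \Tr_D^G(M_\OC^D) \subseteq \Tr(M_\OC)$, so $M_\OC^G = R\beta \subseteq \Tr(M_\OC)$ and $\Br_G(M_\OC) = 0$; this is exactly the case $\OC \cap \BC^G = \varnothing$. If $D = G$, i.e. $\OC = \{b\}$ with $b \in \BC^G$: then $G$ acts trivially on $M_\OC = Rb$, so $\Tr_H^G(M_\OC^H) = \Tr_H^G(Rb) = [G:H]\,Rb$ for every $H < G$, whence $\Tr(M_\OC) = \bigl(\sum_{H < G} [G:H]\,R\bigr) b$. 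Here the hypothesis that $G$ is a $p$-group enters: every proper subgroup has index divisible by $p$, and a subgroup of index exactly $p$ exists, so the integers $[G:H]$ generate the ideal $p\ZM$ of $\ZM$ and $\Tr(M_\OC) = pRb$. Thus $\Br_G(M_\OC) = Rb/pRb$ is free of rank one over $R/pR$ with basis $\br_G(b)$, and $\br_G(b) \neq 0$ precisely because $pR \neq R$.

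Assembling over all orbits then gives $\Br_G(M) = \bigoplus_{b \in \BC^G} (R/pR)\,\br_G(b)$, which is the assertion. The one genuinely delicate point is the case $D = G$: one needs both that all the indices $[G:H]$ are multiples of $p$ and that $p$ itself occurs among them, and one needs $pR \neq R$ to keep $R/pR$ nonzero so that the $\br_G(b)$, $b \in \BC^G$, really form a basis rather than collapsing; everything else is routine bookkeeping with the orbit decomposition. (As a by-product this re-proves the fact recorded just before the lemma that $pM^G \subseteq \Tr(M)$, namely the $D = G$ computation applied to all of $M^G$ at once, which is why $\Br_G(M)$ is naturally an $R/pR$-module.)
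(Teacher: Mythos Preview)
Your argument is correct and is the standard one. The paper itself does not give a proof of this lemma: at the start of the subsection it refers the reader to \cite[\S 11]{thevenaz} for all the facts therein, and the orbit-by-orbit computation you carry out is precisely the argument one finds there. The two key points---that the orbit sum over a non-singleton orbit lies in the image of $\Tr_D^G$ for the proper stabilizer $D$, and that for a singleton orbit the trace ideal is exactly $pR$ because the indices $[G:H]$ of proper subgroups of a $p$-group generate $p\ZM$---are exactly right.

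One small remark: your statement that ``a subgroup of index exactly $p$ exists'' presupposes $G \neq 1$. If $G$ is trivial the sum defining $\Tr(M)$ is empty, so $\Br_G(M) = M$ is free over $R$ rather than over $R/pR$, and the lemma as stated (and the paper's claim that $pM^G \subseteq \Tr(M)$) fails. In the paper's context $G$ is implicitly non-trivial, so this is harmless, but it is worth noting since you flag the $D=G$ case as ``the one genuinely delicate point''.
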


\bigskip

If $M$ is an $R$-algebra and if $G$ acts on $M$ by 
automorphisms of algebra, then $\Tr(M)$ is a two-sided ideal 
of $M^G$ and so $\Br_G(M)$ is an $R$-algebra. Of course, 
$\br_G$ is a morphism of algebras in this case. We recall 
the following result:

\bigskip

\begin{lem}\label{multiplication}
Assume that $pR\neq R$, 
that $M$ is an $R$-algebra, that $G$ acts on $M$ by automorphisms of 
algebra, that $M$ admits an $R$-basis $\BC$ which is permuted 
by $G$ and let us write $ab=\sum_{c \in \BC} \l_{a,b,c} c$ for $a$, 
$b \in \BC$. If $a$, $b \in \BC^G$, then 
$$\br_G(a)\br_G(b)=\sum_{c \in \BC^G} \pi(\l_{a,b,c}) \br_G(c),$$
where $\pi : R \to R/pR$ is the canonical morphism. 
\end{lem}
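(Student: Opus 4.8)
The plan is a direct computation inside the Brauer quotient $\Br_G(M) = M^G/\Tr(M)$. Since $G$ acts by algebra automorphisms, $\Tr(M)$ is a two-sided ideal of $M^G$ and $\br_G : M^G \to \Br_G(M)$ is a morphism of algebras (as recalled just before the statement); because $a$, $b \in \BC^G \subseteq M^G$, this already gives $\br_G(a)\br_G(b) = \br_G(ab)$, so it remains to identify $\br_G(ab)$. Note first that $ab \in M^G$, since $\s(ab) = \s(a)\s(b) = ab$ for every $\s \in G$.

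First I would check that the structure constants are constant along the $G$-orbits of $\BC$. Applying $\s \in G$ to $ab = \sum_{c \in \BC} \l_{a,b,c}\, c$ and using that $\s$ permutes $\BC$, one gets $ab = \s(ab) = \sum_{c \in \BC} \l_{a,b,c}\, \s(c) = \sum_{c \in \BC} \l_{a,b,\s^{-1}(c)}\, c$; comparing coordinates in the basis $\BC$ yields $\l_{a,b,\s(c)} = \l_{a,b,c}$ for all $\s \in G$ and all $c \in \BC$. Next I would regroup $ab = \sum_{c \in \BC} \l_{a,b,c}\, c$ according to $G$-orbits in $\BC$: the singleton orbits together contribute $\sum_{c \in \BC^G} \l_{a,b,c}\, c$, and for a $G$-orbit $\O \subseteq \BC$ with $|\O| > 1$ one picks $c_0 \in \O$ and sets $H = \Stab_G(c_0)$, a proper subgroup of $G$; then $\l_{a,b,c_0}\, c_0 \in M^H$, and by the previous step the contribution of $\O$ is $\l_{a,b,c_0} \sum_{c \in \O} c = \Tr_H^G(\l_{a,b,c_0}\, c_0) \in \Tr(M)$. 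Hence $ab \equiv \sum_{c \in \BC^G} \l_{a,b,c}\, c \pmod{\Tr(M)}$.

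To conclude I would use that $pM^G \subseteq \Tr(M)$, so the $R$-module structure of $M^G$ descends to an $R/pR$-module structure on $\Br_G(M)$ — the one of Lemma \ref{base} — for which $\br_G(\l\, m) = \pi(\l)\, \br_G(m)$; applying $\br_G$ to the congruence above then gives $\br_G(a)\br_G(b) = \br_G(ab) = \sum_{c \in \BC^G} \pi(\l_{a,b,c})\, \br_G(c)$.

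I do not expect a genuine obstacle: the argument is routine bookkeeping. The only points needing care are the $G$-invariance of the coefficients $\l_{a,b,c}$ and the identification of each non-singleton orbit sum as a transfer $\Tr_H^G(-)$ from a proper subgroup $H$ (this is where the finiteness of $G$ enters); the hypothesis $pR \neq R$ is used only to guarantee $R/pR \neq 0$, so that Lemma \ref{base} applies and $(\br_G(c))_{c \in \BC^G}$ is genuinely a basis of $\Br_G(M)$.
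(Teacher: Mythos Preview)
Your argument is correct and is precisely the standard computation one expects here. The paper does not actually give its own proof of this lemma: it states it after announcing that all facts in the subsection are taken from \cite[\S 11]{thevenaz}, with proofs carrying over \emph{mutatis mutandis}. Your write-up is exactly that routine verification --- invariance of the coefficients $\l_{a,b,c}$ along $G$-orbits in $\BC$, then recognition of each non-singleton orbit sum as a relative trace $\Tr_H^G(\l_{a,b,c_0}\,c_0)$ from the proper stabilizer $H=\Stab_G(c_0)$ --- so there is nothing to add.
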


\bigskip

\soussection{Applications to Hecke algebras} 
Since $G$ stabilizes $S$ and $\ph$, it also acts on $\HC$ by automorphisms 
of $A$-algebra (by $\s(T_w)=T_{\s(w)}$ for all $w \in W$). 
Moreover, it permutes the standard basis $(T_w)_{w \in W}$, 
so it follows from Lemma \ref{base} that:

\bigskip

\begin{coro}\label{base H}
$(\br_G(T_w))_{w \in W^G}$ is an $\gfp[\G]$-basis of the 
$\gfp[\G]$-algebra $\Br_G(\HC)$.
\end{coro}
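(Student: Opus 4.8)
The plan is to deduce the Corollary immediately from Lemma \ref{base}. First I would apply that lemma with $R = A = \ZM[\G]$, with $M = \HC$, and with $\BC = \{T_w \mid w \in W\}$ the standard basis of $\HC$. Two hypotheses must be checked. That $pR \neq R$: since $A = \ZM[\G]$ is free as a $\ZM$-module and $p$ is not a unit in $\ZM$, we have $pA \neq A$, and in fact $A/pA \cong \gfp[\G] \neq 0$, so that the phrase ``$\gfp[\G]$-basis'' is meaningful. That $\BC$ is permuted by $G$: this is exactly how the $G$-action on $\HC$ was set up, namely $\s(T_w) = T_{\s(w)}$ for all $\s \in G$ and $w \in W$, so $G$ acts on $\BC$ through its action on $W$.

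Next I would identify the fixed-point set $\BC^G$. Because the family $(T_w)_{w \in W}$ is $A$-linearly independent, $\s(T_w) = T_w$ forces $\s(w) = w$, and this condition is plainly also sufficient; hence $\BC^G = \{T_w \mid w \in W^G\}$, a set in natural bijection with $W^G$. Lemma \ref{base} then yields that $\Br_G(\HC)$ is a free $A/pA = \gfp[\G]$-module with basis $(\br_G(T_w))_{w \in W^G}$, which is the module part of the assertion.

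Finally, for the algebra structure: as recalled just above, $G$ acts on $\HC$ by automorphisms of $A$-algebra, so $\Tr(\HC)$ is a two-sided ideal of $\HC^G$ and $\Br_G(\HC) = \HC^G/\Tr(\HC)$ is an $A/pA$-algebra, i.e.\ a $\gfp[\G]$-algebra; combining the two points gives the Corollary. There is no genuine obstacle here: the whole content is carried by Lemma \ref{base}, and the only hand verification — the description of $\BC^G$ via linear independence of the standard basis — is routine. The single point perhaps worth isolating explicitly is the triviality $pA \neq A$, which guarantees that $A/pA$ is a nonzero coefficient ring.
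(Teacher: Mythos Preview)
Your proposal is correct and follows exactly the paper's approach: the corollary is stated as an immediate consequence of Lemma~\ref{base} applied with $R=A$, $M=\HC$, and $\BC=\{T_w\mid w\in W\}$, after noting that $G$ acts by $A$-algebra automorphisms permuting the standard basis. Your additional verifications (that $pA\neq A$, that $\BC^G=\{T_w\mid w\in W^G\}$ via linear independence, and that the quotient inherits an algebra structure) simply make explicit what the paper leaves implicit.
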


\bigskip

Now, let
$$\can_G : \HC_G \longto \Br_G(\HC)$$
be the unique $A$-linear map such that 
$$\can_G(T_w^\smallgroup)=\br_G(T_w)$$
for all $w \in W^G$. The main result of this subsection 
is the following:

\bigskip

\begin{prop}\label{reduction}
The map $\can_G : \HC_G \longto \Br_G(\HC)$ is a surjective morphism 
of $A$-algebras whose kernel is $p \HC_G$.
\end{prop}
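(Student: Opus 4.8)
The plan is to treat the module‑theoretic half as a formality and to concentrate all the work on the multiplicativity of $\can_G$. Granting the latter, everything else is automatic: by Corollary~\ref{base H} the family $\bigl(\can_G(T_w^\smallgroup)\bigr)_{w\in W^G}=\bigl(\br_G(T_w)\bigr)_{w\in W^G}$ is an $\gfp[\G]$‑basis of $\Br_G(\HC)$, and since $\HC_G=\bigoplus_{w\in W^G}A\,T_w^\smallgroup$ while $\Br_G(\HC)$ is a module over $A/pA=\gfp[\G]$ (here $\Tr(\HC)\supseteq p\HC^G$), the map $\can_G$ is nothing but $\sum_w a_w T_w^\smallgroup\mapsto\sum_w\pi(a_w)\br_G(T_w)$, hence is surjective with kernel $\bigoplus_w(pA)T_w^\smallgroup=p\HC_G$. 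So the whole point is that $\can_G$ respects products.

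Next I would isolate the single non‑formal ingredient, namely the \emph{quadratic relation}
\[\br_G(T_{s_\o})^2=\br_G(T_1)+\bigl(e^{\ph_G(s_\o)}-e^{-\ph_G(s_\o)}\bigr)\br_G(T_{s_\o})\qquad\text{in }\Br_G(\HC)\]
for every $s_\o\in S_G$, where $T_{s_\o}$ denotes the standard basis element of $\HC$ attached to the longest element $s_\o\in W$ of the finite parabolic $W_\o$. Granting it, multiplicativity of $\can_G$ follows by induction on lengths: by Proposition~\ref{fixes} the length of an element of $W^G$ relative to $(W^G,S_G)$ and relative to $(W,S)$ increase or decrease together, so for $s_\o\in S_G$ the relevant factorizations of standard basis elements hold simultaneously on both sides — either $T_w^\smallgroup T_{s_\o}^\smallgroup=T_{ws_\o}^\smallgroup$ together with $T_wT_{s_\o}=T_{ws_\o}$ in $\HC$, or $T_w^\smallgroup=T_{ws_\o}^\smallgroup T_{s_\o}^\smallgroup$ together with $T_w=T_{ws_\o}T_{s_\o}$. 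Using that $\br_G$ is an algebra morphism on $\HC^G$ and, in the second case, the quadratic relation above, one gets $\can_G(T_w^\smallgroup T_{s_\o}^\smallgroup)=\can_G(T_w^\smallgroup)\,\br_G(T_{s_\o})$ in both cases; a further induction on the length of the right‑hand factor then yields $\can_G(T_x^\smallgroup T_y^\smallgroup)=\can_G(T_x^\smallgroup)\can_G(T_y^\smallgroup)$ for all $x,y\in W^G$. (The braid relations never intervene: they hold in $\HC^G$ and descend to $\Br_G(\HC)$.)

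The crux is thus the quadratic relation, and this is where reduction mod $p$ is essential. I would work inside the Hecke subalgebra $\HC_\o\subseteq\HC$ generated by the $T_s$ with $s\in\o$ — the Hecke algebra of $(W_\o,\o,\G,\ph)$, which is $G$‑stable with $G$‑permuted standard basis $(T_v)_{v\in W_\o}$. Applying Proposition~\ref{fixes} to $(W_\o,\o)$ (whose only $G$‑orbit of simple reflections is $\o$ itself, with $W_\o$ finite) gives $W_\o^G=\{1,s_\o\}$, so by Lemma~\ref{base} the algebra $\Br_G(\HC_\o)$ is $\gfp[\G]$‑free of rank~$2$, with basis the unit $\br_G(T_1)$ and $b:=\br_G(T_{s_\o})$; write $b^2=\alpha\,\br_G(T_1)+\beta\,b$ with $\alpha,\beta\in\gfp[\G]$. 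To pin down $\alpha$ and $\beta$ I would use the ``Poincar\'e eigenvector'' $\xb_\o:=\sum_{v\in W_\o}e^{\ph(v)}T_v\in\HC_\o$: pairing each $v$ with $sv$ shows at once that $T_s\xb_\o=e^{\ph(s)}\xb_\o$ for $s\in\o$, whence $T_{s_\o}\xb_\o=e^{\ph(s_\o)}\xb_\o$, and $\xb_\o$ lies in $\HC_\o^G$ because $\ph$ is $G$‑invariant. Since the coefficients of $T_1$ and of $T_{s_\o}$ in $\xb_\o$ are $1$ and $e^{\ph(s_\o)}$, one has $\br_G(\xb_\o)=\br_G(T_1)+e^{\ph(s_\o)}b$ in $\Br_G(\HC_\o)$; applying $\br_G$ to the identity $T_{s_\o}\xb_\o=e^{\ph(s_\o)}\xb_\o$ and comparing coordinates in the basis $\{\br_G(T_1),b\}$ forces $\alpha=1$ and $\beta=e^{\ph(s_\o)}-e^{-\ph(s_\o)}=e^{\ph_G(s_\o)}-e^{-\ph_G(s_\o)}$. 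Pushing this forward along the natural $\gfp[\G]$‑algebra map $\Br_G(\HC_\o)\to\Br_G(\HC)$ induced by the $G$‑equivariant inclusion $\HC_\o\hookrightarrow\HC$ then gives the quadratic relation in $\Br_G(\HC)$.

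I expect that quadratic relation to be the only genuine obstacle. Establishing it by brute force would require computing $T_{s_\o}^2$ — the square of the longest element of an arbitrary finite Coxeter group — and reading off its structure constants modulo $p$ on the $G$‑fixed part, which is awkward; the eigenvector $\xb_\o$ reduces this to a one‑line verification. Everything else — the module statement, the propagation of multiplicativity, the disappearance of the braid relations — is routine once Proposition~\ref{fixes}, Lemma~\ref{base} and Corollary~\ref{base H} are available.
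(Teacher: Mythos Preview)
Your proof is correct and follows essentially the same route as the paper's: the module-theoretic part is dispatched via Corollary~\ref{base H}, multiplicativity is reduced to the quadratic relation for $\br_G(T_{s_\o})$, and that relation is proved by observing that a certain $G$-invariant eigenvector for $T_{s_\o}$ in $\HC_\o$ reduces modulo $\Tr(\HC)$ to $T_{s_\o}+e^{-\ph(s_\o)}$ (up to a unit). Your ``Poincar\'e eigenvector'' $\xb_\o$ is precisely $e^{\ph(s_\o)}C_{s_\o}$, and your identity $T_{s_\o}\xb_\o=e^{\ph(s_\o)}\xb_\o$ is the paper's $(T_{s_\o}-e^{\ph(s_\o)})C_{s_\o}=0$; the only difference is that you verify this by hand while the paper cites \cite[Corollary~12.2 and Theorem~6.6(b)]{lusztig}.
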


\bigskip

\begin{proof}
It follows from Corollary \ref{base H} that $\can_G$ is surjective 
and that $\Ker(\can_G)=p\HC_G$. It remains to show that 
$\can_G$ is a morphism of algebras. First, note that if 
$x$, $y \in W^G$ satisfy $\ell_G(xy)=\ell_G(x)+\ell_G(y)$, 
then $\ell(xy)=\ell(x)+\ell(y)$ (by Proposition \ref{fixes}) and so
\begin{multline*}
\can_G(T_x^\smallgroup T_y^\smallgroup)=\can_G(T_{xy}^\smallgroup)=
\br_G(T_{xy})\\
=\br_G(T_x T_y)=\br_G(T_x)\br_G(T_y)=
\can_G(T_x^\smallgroup)\can_G(T_y^\smallgroup). 
\end{multline*}
So it remains to show that, if $\o$ is a $G$-orbit in $S$ such that 
$W_\o$ is finite, 
then 
$$\br_G((T_{s_\o}-e^{\ph(s_\o)})(T_{s_\o} + e^{-\ph(s_\o)})) = 0.\leqno{(?)}$$
Since $s_\o$ is the longest element of $W_\o$, we have 
\cite[Corollary 12.2]{lusztig}
$$C_{s_\o}=\sum_{w \in W_\o} e^{\ph(w)-\ph(s_\o)} T_w$$
and \cite[Theorem 6.6 (b)]{lusztig}
$$(T_{s_\o}-e^{\ph(s_\o)}) C_{s_\o}=0.$$
But $(W_\o)^G=\{1,s_\o\}$. Since $\ph(w)=\ph(\s(w))$ for all $w \in W_\o$ 
and all $\s \in G$, we have 
$$C_{s_\o} \equiv T_{s_\o} + e^{-\ph(s_\o)} \mod \Tr(\HC).$$
This completes the proof of $(?)$. 
\end{proof}

\bigskip

\begin{coro}\label{iso}
$\gfp \otimes_\ZM \HC_G \simeq \Br_G(\HC)$.
\end{coro}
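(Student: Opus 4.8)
The plan is to obtain this as an immediate formal consequence of Proposition~\ref{reduction}, the only extra ingredient being the elementary compatibility of extension of scalars with passage to a quotient.

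First I would record that $\HC_G$ is a free $A$-module, with basis $(T_w^\smallgroup)_{w \in W^G}$, and that $A=\ZM[\G]$ is itself a free $\ZM$-module; hence $\HC_G$ is free, in particular flat, as a $\ZM$-module. Consequently the canonical map
$$\gfp \otimes_\ZM \HC_G \longto \HC_G/p\HC_G$$
is an isomorphism, and it is visibly multiplicative, so it is an isomorphism of rings — in fact of $\gfp[\G]$-algebras, once one keeps track of the identification $\gfp[\G]=A/pA$.

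Then I would invoke Proposition~\ref{reduction}: since $\can_G : \HC_G \longto \Br_G(\HC)$ is a surjective morphism of $A$-algebras whose kernel is exactly $p\HC_G$, it induces an isomorphism of $A$-algebras $\HC_G/p\HC_G \simeq \Br_G(\HC)$. Composing this with the isomorphism of the previous paragraph yields the asserted isomorphism $\gfp \otimes_\ZM \HC_G \simeq \Br_G(\HC)$.

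There is essentially no obstacle here, all the substantive work having been carried out in Proposition~\ref{reduction}. The only point that deserves to be spelled out is the identification of $\gfp \otimes_\ZM \HC_G$ with $\HC_G/p\HC_G$ as algebras, which is the routine consequence of the freeness of $\HC_G$ over $\ZM$ indicated above.
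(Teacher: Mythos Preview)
Your proposal is correct and matches the paper's approach: the corollary is stated without proof, as an immediate consequence of Proposition~\ref{reduction} via the first isomorphism theorem together with the standard identification $\gfp \otimes_\ZM \HC_G \simeq \HC_G/p\HC_G$. (Incidentally, this last identification holds for any $\ZM$-algebra, so the appeal to freeness is harmless but unnecessary.)
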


\bigskip

\begin{coro}\label{tau}
If $h \in \HC_G$ and $h' \in \HC^G$ are such that 
$\can_G(h) = \br_G(h')$, then $\t_G(h) \equiv \t(h') \mod pA$. 
\end{coro}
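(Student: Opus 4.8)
The plan is to reduce everything to the standard bases and then quote the linear independence from Corollary \ref{base H}. Let $\pi : A \to \gfp[\G]$ denote the reduction modulo $p$; recall that $\Br_G(\HC)$ is a $\gfp[\G]$-module, so the $A$-action on it factors through $\pi$. Write $h = \sum_{w \in W^G} a_w T_w^\smallgroup$ with $a_w \in A$. By definition $\t_G(h) = a_1$, and by $A$-linearity of $\can_G$ together with $\can_G(T_w^\smallgroup) = \br_G(T_w)$ we get $\can_G(h) = \sum_{w \in W^G} \pi(a_w)\,\br_G(T_w)$.

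Next I would compute $\br_G(h')$. Write $h' = \sum_{w \in W} b_w T_w$; since $h' \in \HC^G$ and $G$ fixes $A$ pointwise, we have $b_{\s(w)} = b_w$ for all $\s \in G$, and in particular $\t(h') = b_1$ with $1 \in W^G$. Grouping the sum over $G$-orbits in $W$: if $w \notin W^G$ then its stabilizer $G_w$ is a proper subgroup of $G$, and $b_w T_w \in \HC^{G_w}$ (because $\s(b_w T_w) = b_w T_{\s(w)} = b_w T_w$ for $\s \in G_w$), so $\sum_{v \in G\cdot w} b_v T_v = \Tr_{G_w}^G(b_w T_w) \in \Tr(\HC)$. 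Hence all orbit sums outside $W^G$ die in $\Br_G(\HC)$, leaving $\br_G(h') = \sum_{w \in W^G} \pi(b_w)\,\br_G(T_w)$.

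Finally, the hypothesis $\can_G(h) = \br_G(h')$ becomes $\sum_{w \in W^G} \pi(a_w)\,\br_G(T_w) = \sum_{w \in W^G} \pi(b_w)\,\br_G(T_w)$ in $\Br_G(\HC)$; since $(\br_G(T_w))_{w \in W^G}$ is a $\gfp[\G]$-basis by Corollary \ref{base H}, we conclude $\pi(a_w) = \pi(b_w)$, i.e.\ $a_w \equiv b_w \bmod pA$, for every $w \in W^G$. Taking $w = 1$ gives $\t_G(h) = a_1 \equiv b_1 = \t(h') \bmod pA$. I do not expect a genuine obstacle here; the only points needing a little care are the verification that the orbit sums for $w \notin W^G$ really land in $\Tr(\HC)$ (which uses that $G$ acts $A$-linearly, so that $b_w T_w$ is $G_w$-fixed), and the bookkeeping that reduction modulo $p$ of the $A$-coefficients is compatible with the $\gfp[\G]$-module structure on $\Br_G(\HC)$.
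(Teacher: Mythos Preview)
Your argument is correct and is exactly the unwinding of definitions that the paper has in mind: the corollary is stated without proof in the paper, as an immediate consequence of Corollary~\ref{base H} (equivalently, of Proposition~\ref{reduction}) together with the fact that non-fixed $G$-orbit sums land in $\Tr(\HC)$. There is nothing to add.
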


\bigskip

\begin{prop}\label{C mod p}
If $w \in W^G$, then $\can_G(C_w^\smallgroup) = \br_G(C_w)$.
\end{prop}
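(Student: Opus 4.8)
The plan is to recognise $\br_G(C_w)$ as the Kazhdan--Lusztig basis element of $\Br_G(\HC)\cong\HC_G/p\HC_G$ attached to $w$, by checking that it enjoys the two defining properties (bar-invariance and triangularity against the standard basis) and then concluding, by uniqueness, that it must equal $\can_G(C_w^\smallgroup)$.

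First I would transport the bar involution to the Brauer quotient. Since each $\s\in G$ permutes the standard basis, a one-line check on the $T_w$ (using $\overline{T}_w=T_{w^{-1}}^{-1}$) shows that $\s$ commutes with the bar involution of $\HC$; hence bar stabilises $\HC^G$, and as $\Tr_H^G$ is built from the $\s$'s and bar is additive it also stabilises $\Tr(\HC)$, so bar descends to a ($\gfp[\G]$-antilinear) involution of $\Br_G(\HC)$ for which $\br_G$ is equivariant. Using that $\can_G$ is an algebra morphism (Proposition \ref{reduction}) and that $\br_G$ is an algebra morphism on $\HC^G$, the same computation on standard basis elements shows that $\can_G$ intertwines the bar involutions of $\HC_G$ and $\Br_G(\HC)$. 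In particular $\can_G(C_w^\smallgroup)$ is bar-invariant and, from $C_w^\smallgroup\equiv T_w^\smallgroup$ modulo $\HC_{G,<0}$, satisfies $\can_G(C_w^\smallgroup)\equiv\can_G(T_w^\smallgroup)=\br_G(T_w)$ modulo $\can_G(\HC_{G,<0})$.

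Next, for $w\in W^G$ the element $C_w$ is itself $G$-invariant: for $\s\in G$ the element $\s(C_w)$ is bar-invariant and congruent to $T_{\s(w)}=T_w$ modulo $\HC_{<0}$ (which $\s$ preserves), so $\s(C_w)=C_w$ by uniqueness of the Kazhdan--Lusztig basis. Thus $\br_G(C_w)$ makes sense and is bar-invariant by the previous paragraph. For triangularity, note that $C_w-T_w$ lies in $\HC^G\cap\HC_{<0}=\bigoplus_{\O}A_{<0}\Sigma_\O$, the sum ranging over $G$-orbits $\O$ in $W$ with $\Sigma_\O=\sum_{y\in\O}T_y$. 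For a nontrivial orbit, $\Sigma_\O=\Tr_{\Stab(y)}^G(T_y)\in\Tr(\HC)$, so $\br_G(\Sigma_\O)=0$; for a trivial orbit $\{y\}$ (so $y\in W^G$), $\br_G(A_{<0}\,\Sigma_\O)=A_{<0}\,\br_G(T_y)\subseteq\can_G(\HC_{G,<0})$. Hence $\br_G(C_w-T_w)\in\can_G(\HC_{G,<0})$, i.e. $\br_G(C_w)\equiv\br_G(T_w)$ modulo $\can_G(\HC_{G,<0})$ (here Corollary \ref{base H} is used to guarantee that $\can_G(\HC_{G,<0})$ is the expected submodule, spanned by the $\br_G(T_y)$, $y\in W^G$, with coefficients supported in negative degrees).

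So $\br_G(C_w)$ and $\can_G(C_w^\smallgroup)$ are two bar-invariant elements of $\Br_G(\HC)$ that are congruent to $\br_G(T_w)$ modulo $\can_G(\HC_{G,<0})$, and it remains to see that such an element is unique. Transporting back along $\can_G$, the difference of two such elements is a bar-invariant element of $\HC_G/p\HC_G$ lying in the image of $\HC_{G,<0}$, and the rigidity argument of \cite[proof of Theorem 5.2]{lusztig} applies verbatim over $\gfp[\G]$: expanding in the standard basis, the coefficient of the $\ell_G$-longest $y$ occurring is forced to be bar-fixed (using $\overline{T}_y^\smallgroup=T_y^\smallgroup+(\text{strictly shorter terms})$) while being supported in strictly negative degrees, hence zero, and one descends. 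This yields $\br_G(C_w)=\can_G(C_w^\smallgroup)$. The step I expect to require the most care is precisely this reduction of the uniqueness mechanism from $A$ to $\gfp[\G]$, together with the bookkeeping of the earlier paragraphs: one must be sure that passing modulo $p$ neither destroys the degree information on which the bar/triangularity rigidity rests, nor conflates the surviving basis $(\br_G(T_y))_{y\in W^G}$ with images of non-fixed basis elements — which is exactly why it matters that orbit sums over nontrivial $G$-orbits vanish in the Brauer quotient.
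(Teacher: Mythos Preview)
Your proposal is correct and follows essentially the same strategy as the paper: show that both $\can_G(C_w^\smallgroup)$ and $\br_G(C_w)$ are bar-invariant and congruent to $\br_G(T_w)$ modulo the image of $\HC_{G,<0}$, then invoke the standard uniqueness argument (a bar-fixed element with coefficients in $\gfp\otimes_\ZM A_{<0}$ must vanish, by looking at the Bruhat-maximal term). The paper's proof is simply a terser version of yours, stating the bar-invariance and triangularity of the difference $C=\can_G(C_w^\smallgroup)-\br_G(C_w)$ without spelling out the intermediate checks (that $G$ commutes with bar, that $C_w\in\HC^G$, that nontrivial orbit sums die in the Brauer quotient) which you have made explicit.
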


\bigskip

\begin{proof}
Let $C=\can_G(C_w^\smallgroup)-\br_G(C_w)$. Then 
$$\overline{C}=C.$$
where $\overline{~\vphantom{a}} : \Br_G(\HC) \to \Br_G(\HC)$ is defined 
by $\overline{\br_G(h)}=\br_G(\overline{h})$ for all $h \in \HC^G$. 
Moreover, there exists a family $(\a_w)_{w \in W^G}$ 
of elements of $\gfp \otimes_\ZM A_{<0}$ such that 
$$C=\sum_{w \in W^G} \a_w \br_G(T_w).$$ 
Assume that $C \neq 0$ and let $w$ be maximal (for the Bruhat order) 
such that $\a_w \neq 0$. Then 
$$\overline{C}=\overline{\a}_w \br_G(T_{w^{-1}}^{-1}) + 
\sum_{\stackrel{x \in W^G}{x \neq w}} \overline{\a}_x \br_G(T_{x^{-1}}^{-1}).$$
Therefore, the coefficient of $\br_G(T_w)$ in $\overline{C}$ is equal 
to $\overline{\a}_w$. But $C=\overline{C}$, so 
$\a_w=\overline{\a}_w$. Since $\a_w \neq 0$ and $\a_w \in 
\gfp \otimes_\ZM A_{<0}$, we get a contradiction. So $C=0$, 
as desired.
\end{proof}

\bigskip

\begin{coro}\label{structure}
If $x$, $y$, $z \in W^G$, then 
$h_{x,y,z} \equiv h_{x,y,z}^\smallgroup \mod pA$ and 
$\t(C_z) \equiv \t_G(C_z^G) \mod pA$.
\end{coro}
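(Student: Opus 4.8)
The second congruence is immediate from the results just established. Since $z\in W^G$ we have $C_z\in\HC^G$ (see below), and Proposition~\ref{C mod p} gives $\can_G(C_z^\smallgroup)=\br_G(C_z)$; so Corollary~\ref{tau}, applied to $h=C_z^\smallgroup\in\HC_G$ and $h'=C_z\in\HC^G$, yields $\t_G(C_z^\smallgroup)\equiv\t(C_z)\mod pA$. The substance is therefore the first congruence, and the plan is to compute the product $\br_G(C_x)\br_G(C_y)$ in $\Br_G(\HC)$ in two different ways.

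First I would record that $C_w\in\HC^G$ for every $w\in W^G$: each $\s\in G$ fixes $A$, sends $T_w$ to $T_{\s(w)}$, preserves $\HC_{<0}$ and commutes with the bar involution, so $\s(C_w)$ satisfies the properties characterizing $C_{\s(w)}$, whence $\s(C_w)=C_{\s(w)}$; for $w\in W^G$ this reads $\s(C_w)=C_w$. In particular $C_xC_y\in\HC^G$ for $x,y\in W^G$. Now I would apply $\br_G$ to the identity $C_xC_y=\sum_{z\in W}h_{x,y,z}C_z$ of $\HC$, grouped according to the $G$-orbits in $W$. The same equivariance argument shows $h_{x,y,\s(z)}=h_{x,y,z}$ for all $\s\in G$, so each orbit-sum $\sum_{z\in G\cdot z_0}h_{x,y,z}C_z=h_{x,y,z_0}\Tr_{G_{z_0}}^G(C_{z_0})$ attached to a non-fixed $z_0$ lies in $\Tr(\HC)$ (as $G_{z_0}<G$) and is killed by $\br_G$; the orbit-sums over fixed points are the single terms $h_{x,y,z}C_z$ with $z\in W^G$. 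Since $\Br_G(\HC)$ is an $\gfp[\G]$-module and $\br_G$ is $A$-linear, this gives
\[
\br_G(C_x)\br_G(C_y)=\br_G(C_xC_y)=\sum_{z\in W^G}\pi(h_{x,y,z})\,\br_G(C_z),
\]
where $\pi:A\to\gfp[\G]=A/pA$ denotes reduction modulo $p$.

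For the other evaluation I would use that $\can_G$ is a morphism of $A$-algebras (Proposition~\ref{reduction}): applying it to $C_x^\smallgroup C_y^\smallgroup=\sum_{z\in W^G}h_{x,y,z}^\smallgroup C_z^\smallgroup$ and invoking Proposition~\ref{C mod p} again yields $\br_G(C_x)\br_G(C_y)=\sum_{z\in W^G}\pi(h_{x,y,z}^\smallgroup)\,\br_G(C_z)$. Comparing the two expressions then requires knowing that $(\br_G(C_z))_{z\in W^G}$ is an $\gfp[\G]$-basis of $\Br_G(\HC)$; this follows by expanding $C_z=\sum_{w\in W}p_{w,z}T_w$ (with $p_{z,z}=1$, $p_{w,z}\in A_{<0}$, and $p_{w,z}=0$ unless $w\le z$), discarding the non-fixed orbits as before, and noting that the resulting transition matrix to the basis $(\br_G(T_w))_{w\in W^G}$ of Corollary~\ref{base H} is unitriangular for the Bruhat order on $W^G$, hence invertible. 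Equating coefficients gives $\pi(h_{x,y,z})=\pi(h_{x,y,z}^\smallgroup)$, that is $h_{x,y,z}\equiv h_{x,y,z}^\smallgroup\mod pA$.

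The argument has no genuine obstacle: the corollary is a formal consequence of Propositions~\ref{reduction} and~\ref{C mod p} together with Corollaries~\ref{base H} and~\ref{tau}. The only points needing a little care are the two ``discard the non-fixed orbits'' steps --- which use that $G$ is a $p$-group, so a non-trivial orbit has cardinality divisible by $p$ and its partial sum is a transfer from a proper subgroup --- and the bookkeeping that lets the scalars $h_{x,y,z}\in A$ come out of $\br_G$ (resp.\ $\can_G$) as their reductions in $\gfp[\G]$.
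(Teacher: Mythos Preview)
Your proof is correct and follows the same route as the paper's. The paper's one-line proof simply cites Proposition~\ref{C mod p}, Lemma~\ref{multiplication}, and Corollary~\ref{tau}; your argument is the same in substance, except that you unpack the content of Lemma~\ref{multiplication} by hand (the ``discard the non-fixed orbits'' step and the basis check) rather than invoking it directly for the $G$-permuted basis $(C_w)_{w\in W}$.
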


\bigskip

\begin{proof}
This follows immediately from Proposition \ref{C mod p}, from 
Lemma \ref{multiplication} and from Corollary \ref{tau}.
\end{proof}

\section{Lusztig's conjectures}\label{lc}

\medskip

\def\pre#1{\hspace{0.1em}\mathop{\leqslant}_{#1}\nolimits\hspace{0.1em}}
\def\preg#1{\hspace{0.1em}\mathop{\leqslant}_{#1}^\smallgroup
\nolimits\hspace{0.1em}}

\soussection{Cells}
With $(W,S,\G,\ph)$ are associated preorder relations 
$\pre{\LC}$, $\pre{\RC}$ and $\pre{\LC\RC}$ on $W$ as defined 
in \cite[\S 8.1]{lusztig}. The associated equivalence relations 
are denoted by $\sim_\LC$, $\sim_\RC$ and $\sim_{\LC\RC}$ respectively. 
The equivalence classes for the relation $\sim_\LC$ 
(respectively $\sim_\RC$, respectively $\sim_{\LC\RC}$) are called 
left (respectively right, respectively two-sided) cells of $W$ 
(or for $(W,S,\G,\ph)$ if it is necessary to emphasize the weight 
function).

Similarly, with $(W^G,S_G,\G,\ph_G)$ are associated preorder relations 
$\preg{\LC}$, $\preg{\RC}$ and $\preg{\LC\RC}$ on $W$. 
The associated equivalence relations 
are denoted by $\sim_\LC^\smallgroup$, $\sim_\RC^\smallgroup$ 
and $\sim_{\LC\RC}^\smallgroup$ respectively. We shall compare 
in this section the (left, right or two-sided) 
cells of $W$ and the ones of $W^G$.

\bigskip

\soussection{Boundedness} 
Following Lusztig \cite[\S 13.2]{lusztig}, 
we say that $(W,S,\G,\ph)$ is {\it bounded} if there exists 
$\g_0 \in \G$ such that $\deg \t(T_x T_y T_z) \le \g_0$ 
for all $x$, $y$ and $z \in W$. Lusztig has conjectured 
\cite[Conjecture 13.4]{lusztig} that $(W,S,\G,\ph)$ is always 
bounded.

\bigskip

\begin{quotation}
{\bf Hypothesis.} 
{\it From now on, and until the end of this paper, we assume that 
$(W,S,\G,\ph)$ and $(W^G,S_G,\G,\ph_G)$ are bounded. Recall that 
$p$ is a prime number and that $G$ is a finite $p$-group.}
\end{quotation}

\bigskip

\noindent{\sc Remark - } A finite group is of course bounded. 
An affine Weyl group is also bounded \cite[\S 13.2]{lusztig}.\finl

\bigskip

By \cite[Lemma 13.5 (b)]{lusztig}, this hypothesis allows us to define 
Lusztig's function $\ab : W \to \G$ by 
$$\ab(z)=\max_{x,y \in W} \bigl(\deg h_{x,y,z}\bigr).$$
If $x$, $y$, $z \in W$, we shall denote by $\g_{x,y,z^{-1}}$ the unique 
element of $\ZM$ such that 
$$h_{x,y,z} \equiv \g_{x,y,z^{-1}} e^{\ab(z)}  
\mod\Bigl(\mathop{\oplus}_{\g < \ab(z)} \ZM e^\g\Bigr).$$
Similarly, we define a function $\ab_G : W^G \to \G$ 
and elements $\g_{x,y,z^{-1}}^\smallgroup$ of $\ZM$ 
(for $x$, $y$, $z \in W^G$).

\bigskip

Let $\DC=\{z \in W~|~\ab(z)=\D(z)\}$. If $I \subseteq S$, we denote by $\ab_I$ 
the analogue of the function $\ab$ but defined for $W_I$ instead of $W$: 
if $z \in W_I$, then 
$$\ab_I(z)=\max_{x,y \in W_I} \deg h_{x,y,z}.$$
%
\bigskip

\noindent{\bf Lusztig's Conjectures for ${\boldsymbol{(W,S,\G,\ph)}}$.} 
{\it With the above notation, we have:

\begin{itemize}
\item[{\bf ${\boldsymbol{P}}_{\boldsymbol{1}}$.}] If $z \in W$, then 
$\ab(z) \le \D(z)$.

\item[{\bf ${\boldsymbol{P}}_{\boldsymbol{2}}$.}] If $d \in \DC$ and if $x$, 
$y \in W$ satisfy $\g_{x,y,d} \not= 0$, then $x=y^{-1}$.

\item[{\bf ${\boldsymbol{P}}_{\boldsymbol{3}}$.}] If $y \in W$, then 
there exists a unique $d \in \DC$ such that $\g_{y^{-1},y,d} \not= 0$.

\item[{\bf ${\boldsymbol{P}}_{\boldsymbol{4}}$.}] If $z' \pre{\LC\RC} z$, 
then $\ab(z) \le \ab(z')$. Therefore, if $z \sim_{\LC\RC} z'$, then 
$\ab(z)=\ab(z')$. 

\item[{\bf ${\boldsymbol{P}}_{\boldsymbol{5}}$.}] If $d \in \DC$ and 
$y \in W$ satisfy $\g_{y^{-1},y,d} \not= 0$, then 
$\g_{y^{-1},y,d}=n_d=\pm 1$.

\item[{\bf ${\boldsymbol{P}}_{\boldsymbol{6}}$.}] If $d \in \DC$, then $d^2=1$.

\item[{\bf ${\boldsymbol{P}}_{\boldsymbol{7}}$.}] If $x$, $y$, $z \in W$, 
then $\g_{x,y,z}=\g_{y,z,x}$. 

\item[{\bf ${\boldsymbol{P}}_{\boldsymbol{8}}$.}] If $x$, $y$, $z \in W$ 
satisfy $\g_{x,y,z} \not= 0$, then $x \sim_\LC y^{-1}$, $y \sim_\LC z^{-1}$ and 
$z \sim_\LC x^{-1}$. 

\item[{\bf ${\boldsymbol{P}}_{\boldsymbol{9}}$.}] If $z' \pre{\LC} z$ and 
$\ab(z')=\ab(z)$, then $z' \sim_\LC z$.

\item[{\bf ${\boldsymbol{P}}_{\boldsymbol{10}}$.}] If $z' \pre{\RC} z$ and 
$\ab(z')=\ab(z)$, then $z' \sim_\RC z$.

\item[{\bf ${\boldsymbol{P}}_{\boldsymbol{11}}$.}] If $z' \pre{\LC\RC} z$ and 
$\ab(z')=\ab(z)$, then $z' \sim_{\LC\RC} z$.

\item[{\bf ${\boldsymbol{P}}_{\boldsymbol{12}}$.}] If $I \subset S$ and 
$z \in W_I$, then $\ab_I(z)=\ab(z)$.

\item[{\bf ${\boldsymbol{P}}_{\boldsymbol{13}}$.}] Every left cell 
$\CC$ of $W$ contains a unique element $d \in \DC$. If 
$y \in \CC$, then $\g_{y^{-1},y,d} \not= 0$.

\item[{\bf ${\boldsymbol{P}}_{\boldsymbol{14}}$.}] If $z \in W$, then 
$z \sim_{\LC\RC} z^{-1}$. 

\item[{\bf ${\boldsymbol{P}}_{\boldsymbol{15}}$.}] If $x$, $x'$, $y$, $w \in W$ 
are such that $\ab(y)=\ab(w)$, then 
$$\sum_{y' \in W} h_{w,x',y'} \otimes_\ZM h_{x,y',y} = 
\sum_{y' \in W} h_{y',x',y} \otimes_\ZM h_{x,w,y'}$$
in $A \otimes_\ZM A$.
\end{itemize}}

\bigskip

Let us recall the following result:

\bigskip

\begin{lem}\label{P}
Assume that Lusztig's Conjectures $(P_1)$, $(P_2)$, $(P_3)$ 
and $(P_4)$ hold for $(W,S,\G,\ph)$. Then:
\begin{itemize}
\itemth{a} Lusztig's Conjectures $(P_5)$, $(P_6)$, $(P_7)$ 
and $(P_8)$ hold for $(W,S,\G,\ph)$.

\itemth{b} If $d \in \DC$, then $\g_{d,d,d}=n_d = \pm 1$.

\itemth{c} If $x \in W$ and if $d \in \DC$ is the unique element 
of $W$ such that $\g_{x^{-1},x,d} \neq 0$, then $\g_{x,d,x^{-1}} = \pm 1$.
\end{itemize}
\end{lem}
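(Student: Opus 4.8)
The bulk of the work is part~(a); once~(a) is available, parts~(b) and~(c) follow formally from it together with the standing hypotheses $(P_2)$ and $(P_3)$, so the plan is to concentrate on~(a), reproducing the argument of \cite[Chapter~14]{lusztig}.

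For~(a) I would begin with two standard tools. The first is the algebra anti-automorphism $\flat$ of $\HC$ fixing $A$ and sending $T_w\mapsto T_{w^{-1}}$: it commutes with the bar involution and respects the triangularity characterising $C_w$, so $C_w^\flat=C_{w^{-1}}$, whence (expanding $(C_xC_y)^\flat=C_{y^{-1}}C_{x^{-1}}$ in two ways) $h_{x,y,z^{-1}}=h_{y^{-1},x^{-1},z}$; this yields $\ab(z)=\ab(z^{-1})$, $\D(z)=\D(z^{-1})$, $n_z=n_{z^{-1}}$, $\g_{x,y,z}=\g_{y^{-1},x^{-1},z^{-1}}$, and stability of $\DC$ under $z\mapsto z^{-1}$. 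In particular, for the Duflo element $d_y\in\DC$ of $y$ (the one with $\g_{y^{-1},y,d_y}\neq0$ provided by $(P_3)$), the symmetry gives $\g_{y^{-1},y,d_y}=\g_{y^{-1},y,d_y^{-1}}$, so the uniqueness in $(P_3)$ forces $d_y^{-1}=d_y$. The second tool is the symmetrizing trace: from $\t(T_uT_v)=\delta_{u,v^{-1}}$ and $C_w\equiv T_w\bmod\HC_{<0}$ one gets $\t(C_xC_y)=\delta_{x,y^{-1}}+(\text{an element of }A_{<0})$, whereas expanding $\t(C_xC_y)=\sum_z h_{x,y,z}\t(C_z)$ and using $(P_1)$ (so each term lies in $A_{\le 0}$, with degree-$0$ part surviving only when $z\in\DC$ and $\deg h_{x,y,z}=\ab(z)=\D(z)$, where it equals $\g_{x,y,z^{-1}}n_z$) shows, after re-indexing $d=z^{-1}$, that
$$\sum_{d\in\DC}\g_{x,y,d}\,n_d=\delta_{x,y^{-1}}.$$
Taking $x=y^{-1}$ and collapsing the sum by $(P_3)$ gives $\g_{y^{-1},y,d_y}\,n_{d_y}=1$ in $\ZM$, hence $\g_{y^{-1},y,d_y}=n_{d_y}=\pm1$, which is $(P_5)$.

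For $(P_7)$ and $(P_8)$ I would follow \cite[Chapter~14]{lusztig}: the engine is the associativity $\t\bigl((C_xC_y)C_z\bigr)=\t\bigl(C_x(C_yC_z)\bigr)$ (equivalently, $\t(ab)=\t(ba)$ applied to $C_xC_yC_z$), combined with $(P_4)$. Since in $C_xC_y=\sum_w h_{x,y,w}C_w$ the indices $w$ with $h_{x,y,w}\neq0$ are bounded above for the two-sided preorder, $(P_4)$ controls the degrees $\ab(w)$ that can occur; together with boundedness this is what lets one identify the top-degree part of the relevant trilinear expression in the $C$'s with $\g_{x,y,z}$, read off its cyclic symmetry ($(P_7)$), and pin down the cells supporting it ($(P_8)$). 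I expect this degree bookkeeping --- keeping the lower-order corrections $\t(C_wC_z)-\delta_{w,z^{-1}}$ from contaminating the leading term --- to be the main obstacle. Granting $(P_7)$, $(P_6)$ follows at once: for $d\in\DC$ with Duflo element $d_0$, cyclic symmetry turns $\g_{d^{-1},d,d_0}\neq0$ into $\g_{d_0,d^{-1},d}\neq0$, and since $d\in\DC$, $(P_2)$ forces $d_0=d$; as $d_0$ is an involution by the previous paragraph, $d=d^{-1}$.

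Finally, (b) and (c) are formal. For~(c): $d$ is well defined by $(P_3)$; $(P_5)$ gives $\g_{x^{-1},x,d}=n_d=\pm1$; and the cyclic symmetry $(P_7)$ gives $\g_{x^{-1},x,d}=\g_{x,d,x^{-1}}$, so $\g_{x,d,x^{-1}}=\pm1$. For~(b): if $d\in\DC$ has Duflo element $d_0$, the proof of $(P_6)$ produced $d_0=d$ and $d^{-1}=d$, so $\g_{d,d,d}=\g_{d^{-1},d,d_0}=n_{d_0}=n_d=\pm1$ by $(P_5)$.
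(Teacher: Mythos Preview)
Your proposal is correct and follows the same route as the paper: part~(a) is Lusztig's argument from \cite[Chapter~14]{lusztig} (the paper simply cites this, while you sketch the key steps), and your derivations of~(b) and~(c) from $(P_5)$--$(P_8)$ via $(P_2)$, $(P_3)$ are essentially identical to the paper's, only with the steps ordered slightly differently.
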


\bigskip

\begin{proof}
(a) is proved in \cite[Chapter 14]{lusztig}. 

\medskip

(b) By $(P_6)$, we get that $d^2=1$. By $(P_3)$, there exists a unique 
$e \in \DC$ such that $\g_{d,d,e} \neq 0$. By $(P_5)$, this implies that 
$\g_{d,d,e} = n_e = \pm 1$. By $(P_7)$, this implies that 
$\g_{e,d,d} = \pm 1$. By $(P_2)$, we get that $e=d^{-1}=d$.

\medskip

(c) If $x \in W$ and if $d \in \DC$ is the unique element 
of $W$ such that $\g_{x^{-1},x,d} \neq 0$, then 
$\g_{x,d,x^{-1}} = \g_{x^{-1},x,d} = \pm 1$ by $(P_7)$ and $(P_5)$. 
\end{proof}

\bigskip

We can now state the main result of this paper (from which the Theorem 
A in the introduction follows easily by an induction argument on the order of $G$):

\bigskip

\begin{theo}\label{quasi-split}
Recall that $G$ is a finite $p$-group. 
Assume that Lusztig's conjectures $(P_1)$, $(P_2)$, $(P_3)$ 
and $(P_{4})$ hold for both $(W,S,\G,\ph)$ and $(W^G,S_G,\G,\ph_G)$. 
Let $x$ and $y$ be two elements of $W^G$. Then:
\begin{itemize}
\itemth{a} $\ab_G(x)=\ab(x)$.

\itemth{b} $\DC_G = \DC \cap W^G$ ($=\DC^G$).

\itemth{c} Assume moreover that Lusztig's Conjecture 
$(P_{13})$ holds for both $(W,S,\G,\ph)$ and $(W^G,S_G,\G,\ph_G)$. 
Then $x \sim_\LC^\smallgroup y$ (respectively $x \sim_\RC^\smallgroup y$) 
if and only if $x \sim_\LC y$ (respectively $x \sim_\RC y$). 

\itemth{d} Assume moreover that Lusztig's Conjectures $(P_9)$ and  
$(P_{13})$ hold for both $(W,S,\G,\ph)$ and $(W^G,S_G,\G,\ph_G)$. 
Then $x \sim_{\LC\RC}^\smallgroup y$ 
if and only if $x \sim_{\LC\RC} y$. 
\end{itemize}
\end{theo}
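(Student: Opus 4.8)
The plan is to leverage Proposition~\ref{reduction} and its corollaries to transfer the Kazhdan--Lusztig combinatorics of $(W^G,S_G,\G,\ph_G)$ into the Brauer quotient $\Br_G(\HC)$, where everything becomes a ``reduction mod $p$'' of the combinatorics of $(W,S,\G,\ph)$. The key bridge is Corollary~\ref{structure}: for $x$, $y$, $z\in W^G$ we have $h_{x,y,z}\equiv h^\smallgroup_{x,y,z}\bmod pA$ and $\t(C_z)\equiv\t_G(C_z^\smallgroup)\bmod pA$. Since $\G$ is torsion-free, reduction mod $p$ does not change degrees or valuations of nonzero elements, \emph{provided} the relevant coefficient is not divisible by $p$. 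This is where the subtlety lies, and it is exactly what the hypotheses $(P_1)$--$(P_4)$ (and Lemma~\ref{P}) are there to control.

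First I would prove (a). The inequality $\ab(x)\le\ab_G(x)$ is immediate: $h_{x,y,z}\equiv h^\smallgroup_{x,y,z}\bmod pA$ shows that $\deg h^\smallgroup_{x,y,z}\le\ab_G(x)$ forces $\deg h_{x,y,z}\le\ab_G(x)$ unless the leading coefficient $\g^\smallgroup_{x,y,z^{-1}}$ of $h^\smallgroup_{x,y,z}$ is divisible by $p$ -- but by Lemma~\ref{P}(a), $(P_5)$ holds for $W^G$, and more to the point one uses $(P_3)$/$(P_5)$ for $W^G$ to find, for a given $z\in W^G$, a pair realizing $\ab_G(z)$ with leading coefficient $\pm1$ (namely take $d\in\DC_G$ with $\g^\smallgroup_{z^{-1},z,d}\ne0$; then by Lemma~\ref{P}(b),(c) the relevant $\g^\smallgroup$ is $\pm1$, hence survives mod $p$, giving $\ab(z)\ge\ab_G(z)$ once we know $\D_G(z)=\ab_G(z)$ -- actually the cleanest route is: $\D_G(d)=\ab_G(d)$ for $d\in\DC_G$, $\t_G(C_d^\smallgroup)$ has leading coefficient $n_d^\smallgroup=\pm1$ by $(P_5)$-type results, so $\D(d)=\D_G(d)$ by Corollary~\ref{structure}; then $\ab(d)\le\D(d)=\D_G(d)=\ab_G(d)$ by $(P_1)$; combined with $\ab(d)\ge$ the degree coming from $h_{d,d,d}\equiv h^\smallgroup_{d,d,d}$ with unit leading coefficient, one gets equality on $\DC_G$, then propagates to all of $W^G$ via $(P_4)$ and a cell argument). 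I would organize this carefully: show $\D_G=\D$ on $\DC_G$ first, deduce $\ab_G=\ab$ on $\DC_G$, then handle general $x$ by noting each $x$ is $\sim_\LC^\smallgroup$-equivalent (hence $\sim_{\LC\RC}^\smallgroup$) to a Duflo involution.

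For (b): $\DC_G\subseteq\DC$ follows once (a) gives $\ab_G(x)=\ab(x)$ and we show $\D_G(x)=\D(x)$ for $x\in\DC_G$, which is Corollary~\ref{structure} combined with the fact that $n_x^\smallgroup=\pm1$ (so the leading coefficient of $\t_G(C_x^\smallgroup)$ is a unit, hence $\deg$ is preserved mod $p$). For the reverse inclusion $\DC\cap W^G\subseteq\DC_G$, take $x\in W^G$ with $\ab(x)=\D(x)$; by (a) $\ab_G(x)=\ab(x)=\D(x)$, and by $(P_1)$ for $W^G$, $\ab_G(x)\le\D_G(x)$, so it suffices to show $\D_G(x)\le\D(x)$, i.e.\ that $\t_G(C_x^\smallgroup)\ne0$ in low degree is controlled -- again Corollary~\ref{structure} gives $\t(C_x)\equiv\t_G(C_x^\smallgroup)\bmod pA$, and since $\D(x)=-\deg\t(C_x)$ with coefficient $n_x$, one needs $p\nmid n_x$; but $(P_5)$ forces $n_x=\pm1$ for $x\in\DC$, so $\deg$ survives and $\D_G(x)\ge\D(x)$... the inequalities then pin down $\D_G(x)=\D(x)=\ab_G(x)$, i.e.\ $x\in\DC_G$.

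For (c) and (d), the strategy is to compare the preorders. Using $(P_{13})$ for both groups, each left cell is pinned down by its unique Duflo involution and the nonvanishing of $\g_{y^{-1},y,d}$; since (b) identifies the Duflo sets and Corollary~\ref{structure} shows $\g^\smallgroup_{y^{-1},y,d}\equiv\g_{y^{-1},y,d}\bmod p$ with these being $\pm1$ by $(P_5)$ (hence nonzero in one group iff nonzero in the other), one gets that $x\sim_\LC^\smallgroup y$ iff $x$, $y$ have the same Duflo involution in $\DC_G=\DC\cap W^G$, iff $x\sim_\LC y$ (using $(P_{13})$ for $W$ to read off the cell of $x$ from its Duflo involution -- here one also needs that the Duflo involution attached to $x$ \emph{inside} $W$ lies in $W^G$, which follows because $\g_{x^{-1},x,d}=\g^\smallgroup_{x^{-1},x,d}\ne0\bmod p$ picks out a $G$-fixed $d$). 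The right-cell statement is symmetric. For (d), two-sided cells are unions of left cells and, via $(P_9)$ plus $(P_4)$, $x\sim_{\LC\RC}y$ iff $\ab(x)=\ab(y)$ \emph{and} they are linked through a chain -- more precisely one uses that under $(P_9)$, $\sim_{\LC\RC}$ on a fixed $\ab$-value is generated by $\sim_\LC$ and $\sim_\RC$; combining (a), (c) and the matching of $\ab$-values gives the equivalence. I expect the main obstacle to be bookkeeping in (a)/(b): making sure that at every point where we invoke ``degree is preserved mod $p$'' we have genuinely arranged for the leading (or trailing) coefficient to be $\pm1$, which repeatedly routes through $(P_5)$, $(P_7)$ and Lemma~\ref{P}(b),(c); getting the logical order right so that no claim is used before it is established is the delicate part, rather than any single computation.
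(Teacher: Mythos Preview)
Your treatment of (b), (c) and (d) matches the paper's. The problem is (a), where your plan has a real gap.

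The ``propagation'' step does not close. Suppose you have shown $\ab_G=\ab$ on $\DC_G$. For general $z\in W^G$, using $(P_3)$ and $(P_8)$ for $W^G$ gives $z\sim_\LC^\smallgroup d$ for some $d\in\DC_G$, hence $\ab_G(z)=\ab_G(d)=\ab(d)$. But you still need $\ab(z)=\ab(d)$, which would require $z\sim_{\LC\RC} d$ \emph{in $W$}, and that is not available. The $W$-side of $(P_3)$ gives instead $z\sim_\LC d'$ for some $d'\in\DC\cap W^G$ (by uniqueness $d'$ is $G$-fixed), so $\ab(z)=\ab(d')$; but at this point you know neither $d'\in\DC_G$, nor $\ab=\ab_G$ at $d'$, nor $d=d'$. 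Running the symmetric argument to get $\ab=\ab_G$ on $\DC\cap W^G$ as well does not help: you end up with $\ab_G(z)=\ab(d)$ and $\ab(z)=\ab_G(d')$ and no bridge between them without already knowing (b) or (c).

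The direct argument you sketched and then abandoned is the one that works, and it is exactly the paper's proof. Your worry ``once we know $\D_G(z)=\ab_G(z)$'' is spurious: by definition $\g^\smallgroup_{z,d,z^{-1}}$ is the coefficient of $e^{\ab_G(z)}$ in $h^\smallgroup_{z,d,z}$ (nothing involving $\D_G$), so if it equals $\pm 1$ via Lemma~\ref{P}(c), the congruence $h_{z,d,z}\equiv h^\smallgroup_{z,d,z}\bmod pA$ immediately yields $\deg h_{z,d,z}\ge\ab_G(z)$, hence $\ab(z)\ge\ab_G(z)$. The reverse inequality is symmetric: by $(P_3)$ for $W$ the unique $d'\in\DC$ with $\g_{z^{-1},z,d'}\neq 0$ is $G$-fixed, $\g_{z,d',z^{-1}}=\pm 1$ survives mod $p$, and one reads off $\ab_G(z)\ge\ab(z)$. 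This handles every $z$ at once, with no propagation needed. (Incidentally, your opening claim that ``$\ab(x)\le\ab_G(x)$ is immediate'' is also unjustified: $\ab(z)$ is a maximum over \emph{all} pairs in $W$, and the congruence only controls pairs in $W^G$.)
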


\bigskip

\begin{proof}
(a) By Corollary \ref{structure}, we have, for all $x$, $y$, $z \in W^G$:
\begin{itemize}
\itemth{1} If $\g_{x,y,z^{-1}} \not\equiv 0 \mod p$, then $\ab(z) \le \ab_G(z)$.

\itemth{2} If $\g_{x,y,z^{-1}}^\smallgroup \not\equiv 0 \mod p$, then 
$\ab_G(z) \le \ab(z)$.
\end{itemize}
Now let $z \in W^G$. 
By $(P_3)$, there exists a unique $d\in \DC$ such that 
$\g_{z^{-1},z,d} \neq 0$. From the uniqueness, we get that 
$d \in \DC^G \subseteq W^G$. By Lemma \ref{P} (c), we get that 
$\g_{z,d,z^{-1}} = \pm 1$. So $\ab(z) \le \ab_G(z)$ by (1). 

The same argument shows that 
there exists $d \in \DC_G$ such that $\g_{z,d,z^{-1}}^\smallgroup = \pm 1$, 
so (2) can be applied to get that $\ab_G(z)\le \ab(z)$. The proof of (a) 
is complete.

\medskip

Before going further, let us state the following consequence of (a):
\begin{quotation}
\begin{coro}\label{gamma p}
If $x$, $y$, $z \in W^G$, then 
$\g_{x,y,z} \equiv \g_{x,y,z}^\smallgroup \mod p$.
\end{coro}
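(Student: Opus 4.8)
The plan is to deduce Corollary~\ref{gamma p} from part~(a) of the theorem just proved, together with Corollary~\ref{structure}. Recall that Corollary~\ref{structure} gives $h_{x,y,z} \equiv h_{x,y,z}^\smallgroup \mod pA$ for all $x$, $y$, $z \in W^G$. The point is to combine this congruence of polynomials in $A = \ZM[\G]$ with the fact (from part~(a)) that the $\ab$-functions agree on $W^G$, so that the coefficient of $e^{\ab(z)}$ extracted on either side refers to the \emph{same} element of $\G$.

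First I would fix $x$, $y$, $z \in W^G$ and recall the definitions: $\g_{x,y,z^{-1}}$ is by definition the coefficient of $e^{\ab(z)}$ in $h_{x,y,z}$, i.e.\ we have $h_{x,y,z} \equiv \g_{x,y,z^{-1}} e^{\ab(z)} \mod \bigl(\mathop{\oplus}_{\g < \ab(z)} \ZM e^\g\bigr)$, and similarly $h_{x,y,z}^\smallgroup \equiv \g_{x,y,z^{-1}}^\smallgroup e^{\ab_G(z)} \mod \bigl(\mathop{\oplus}_{\g < \ab_G(z)} \ZM e^\g\bigr)$. By part~(a) we have $\ab_G(z) = \ab(z)$, call this common element $\g_0 \in \G$. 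Hence both $\g_{x,y,z^{-1}}$ and $\g_{x,y,z^{-1}}^\smallgroup$ are the coefficient of the \emph{same} basis element $e^{\g_0}$ of $A$, in $h_{x,y,z}$ and $h_{x,y,z}^\smallgroup$ respectively. (Note that this is a statement about a fixed coefficient; we do not even need that $\g_0$ is the degree, only that it is a well-defined element of $\G$ attached symmetrically to both sides.)

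Now extracting the coefficient of $e^{\g_0}$ is a $\ZM$-linear map $A \to \ZM$, so it is compatible with reduction mod $p$. Applying it to the congruence $h_{x,y,z} \equiv h_{x,y,z}^\smallgroup \mod pA$ of Corollary~\ref{structure} yields $\g_{x,y,z^{-1}} \equiv \g_{x,y,z^{-1}}^\smallgroup \mod p$. Finally, replacing $z$ by $z^{-1}$ (which lies in $W^G$ whenever $z$ does) gives $\g_{x,y,z} \equiv \g_{x,y,z}^\smallgroup \mod p$, as claimed. There is no real obstacle here: the only thing to be careful about is precisely the bookkeeping in the previous paragraph, namely that one must invoke part~(a) \emph{before} reading off the coefficient, since otherwise the two $\g$'s would a priori be coefficients of different powers $e^{\ab(z)}$ and $e^{\ab_G(z)}$ and the mod-$p$ comparison would not follow from Corollary~\ref{structure} alone.
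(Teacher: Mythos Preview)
Your proof is correct and follows exactly the approach indicated in the paper, which simply says the result follows from Theorem~\ref{quasi-split}~(a) and Corollary~\ref{structure}. You have merely spelled out the details: that part~(a) ensures the two $\gamma$'s are coefficients of the \emph{same} basis element $e^{\ab(z)}=e^{\ab_G(z)}$, so that extracting this coefficient from the congruence $h_{x,y,z}\equiv h_{x,y,z}^\smallgroup \bmod pA$ yields the claim.
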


\begin{proof}
This follows easily from Theorem \ref{quasi-split} (a) and 
Corollary \ref{structure}.
\end{proof}
\end{quotation}

\bigskip

(b) Let $d \in \DC^G$. By Lemma \ref{P} (b), we have 
$n_d = \pm 1$. Moreover, by Corollary \ref{structure}, we have 
$$\t(C_d) \equiv \t_G(C_d^\smallgroup) \mod pA.$$
This shows that the coefficient of $e^{-\D(d)}$ in $\t_G(C_d^\smallgroup)$ is 
non-zero. So $\D_G(d) \le \D(d)$. But, by $(P_1)$, 
$$\ab_G(d) \le \D_G(d) \le \D(d)=\ab(d).$$
So $\ab_G(d) = \D_G(d) = \D(d)=\ab(d)$ by (a). In particular, 
$d \in \DC_G$.

The same argument shows that, if $d \in \DC_G$, then $\D(d) \le \D_G(d)$ 
and again we get similarly that $d \in \DC$. The proof of (b) is complete.

\bigskip

(c) Let $d$ (respectively $e$) be the unique element 
of $\DC$ such that $\g_{x^{-1},x,d} =\pm 1$ (respectively 
$\g_{y^{-1},y,e} =\pm 1$). By uniqueness, we have $d$, $e \in \DC^G=\DC_G$. 
By Corollary \ref{gamma p}, we also get $\g_{x^{-1},x,d}^\smallgroup \neq 0$ 
and $\g_{y^{-1},y,e}^\smallgroup \neq 0$. Therefore, by $(P_8)$, we have 
$$x \sim_\LC d,\quad x \sim_\LC^\smallgroup d, 
\quad y \sim_\LC e\quad\text{and}\quad y \sim_\LC^\smallgroup e.$$
But, by $(P_{13})$, we have 
$x \sim_\LC y$ (respectively $x \sim_\LC^\smallgroup y$) 
if and only if $d=e$. This proves (c). 

\bigskip

(d) Recall that $(P_9)$ implies $(P_{10})$. Moreover, it follows easily 
from $(P_4)$, $(P_9)$ and $(P_{10})$ that $\sim_{\LC\RC}$ 
(respectively $\sim_{\LC\RC}^\smallgroup$) is the equivalence 
relation generated by $\sim_\LC$ and $\sim_\RC$ (respectively 
$\sim_\LC^\smallgroup$ and $\sim_\RC^\smallgroup$). So (d) 
follows from (c). 
\end{proof}

\bigskip

\soussection{Asymptotic algebra} 
Let $J$ (respectively $J_G$) be the free abelian group 
with basis $(t_w)_{w \in W}$ (respectively $(t_w^\smallgroup)_{w \in W}$). 

\bigskip

\begin{quotation}
\noindent{\bf Hypothesis.} 
{\it In this subsection, and only in this subsection, 
we assume moreover that Lusztig's Conjectures 
$(P_1)$, $(P_2)$,\dots, $(P_{15})$ 
hold for $(W,S,\G,\ph)$ and $(W^G,S_G,\G,\ph_G)$.}
\end{quotation}

\bigskip

By \cite[\S 18.3]{lusztig}, $J$ (respectively $J_G$) can be 
endowed with a structure of associative ring, the multiplication 
being defined by $t_x t_y =\sum_{z \in W} \g_{x,y,z^{-1}} t_z$ 
(respectively $t_x^\smallgroup t_y^\smallgroup =
\sum_{z \in W^G} \g_{x,y,z^{-1}}^\smallgroup t_z^\smallgroup$). 
Then it follows immediately from Corollary \ref{gamma p} and 
from Lemma \ref{multiplication} that:

\bigskip

\begin{theo}\label{asymptotique}
Assume that $G$ is a finite $p$-group and that Lusztig's Conjectures 
$(P_1)$, $(P_2)$,\dots, $(P_{15})$ hold for $(W,S,\G,\ph)$ and  $(W^G,S_G,\G,\ph_G)$. Then 
$$\gfp \otimes_\ZM J_G \simeq \Br_G(J).$$
\end{theo}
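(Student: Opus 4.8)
The plan is to exhibit the obvious $\gfp$-linear bijection $1 \otimes t_w^\smallgroup \mapsto \br_G(t_w)$ (for $w \in W^G$) and to verify that it is a ring homomorphism. First I would record that $G$ acts on the ring $J$ by ring automorphisms permuting the $\ZM$-basis $(t_w)_{w \in W}$. Indeed, each $\s \in G$ is an automorphism of $(W,S)$ fixing $\ph$, so it preserves the length function, stabilises $\HC_{<0}$, commutes with the bar involution of $\HC$ and permutes the standard basis by $T_w \mapsto T_{\s(w)}$; by the uniqueness characterising the Kazhdan--Lusztig basis this forces $\s(C_w) = C_{\s(w)}$, whence $h_{\s(x),\s(y),\s(z)} = h_{x,y,z}$, and therefore $\ab(\s(z)) = \ab(z)$ and $\g_{\s(x),\s(y),\s(z)} = \g_{x,y,z}$. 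This $G$-invariance of the structure constants $\g_{x,y,z^{-1}}$ is exactly what makes $\s \colon t_w \mapsto t_{\s(w)}$ a ring automorphism of $J$, so that the Brauer quotient $\Br_G(J)$ is a well-defined $\gfp$-algebra.

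Next I would apply Lemma \ref{base} with $R = \ZM$ and $M = J$: the $G$-fixed part of the permutation basis $\BC = (t_w)_{w \in W}$ is $\BC^G = \{\,t_w \mid w \in W^G\,\}$, so $\Br_G(J)$ is a free $\gfp$-module with basis $(\br_G(t_w))_{w \in W^G}$. Since $\gfp \otimes_\ZM J_G$ is free over $\gfp$ with basis $(1 \otimes t_w^\smallgroup)_{w \in W^G}$, the map above is at once an isomorphism of $\gfp$-modules, and it remains only to prove multiplicativity.

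For multiplicativity, fix $x, y \in W^G$. Applying Lemma \ref{multiplication} to $M = J$ with its permuted basis (the relevant structure constant being $\l_{t_x,t_y,t_z} = \g_{x,y,z^{-1}}$) gives $\br_G(t_x)\,\br_G(t_y) = \sum_{z \in W^G} \pi(\g_{x,y,z^{-1}})\,\br_G(t_z)$, where $\pi \colon \ZM \to \gfp$ is reduction modulo $p$. On the other side, $1 \otimes (t_x^\smallgroup t_y^\smallgroup) = \sum_{z \in W^G} \pi(\g_{x,y,z^{-1}}^\smallgroup)\,(1 \otimes t_z^\smallgroup)$. Since $z \in W^G$ if and only if $z^{-1} \in W^G$, Corollary \ref{gamma p} yields $\g_{x,y,z^{-1}} \equiv \g_{x,y,z^{-1}}^\smallgroup \bmod p$ for every $z \in W^G$, so the two expressions agree under the identification and the map is a ring isomorphism.

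Thus the theorem is essentially a one-line combination of Lemmas \ref{base} and \ref{multiplication} once Corollary \ref{gamma p} is at hand. The only genuinely substantive input is Corollary \ref{gamma p} itself, which rests on Theorem \ref{quasi-split}(a) (the equality $\ab_G = \ab$ on $W^G$) together with Corollary \ref{structure}; both are available under the running hypothesis that $(P_1), \dots, (P_{15})$ hold for $(W,S,\G,\ph)$ and for $(W^G,S_G,\G,\ph_G)$. So there is no real obstacle at this stage; the one point requiring a little care is the bookkeeping that $\br_G$ kills every basis element $t_z$ with $z \notin W^G$ occurring in the product $t_x t_y$ — which is precisely what Lemma \ref{multiplication} encodes and what allows the sums above to be reindexed over $W^G$.
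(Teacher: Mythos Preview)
Your proof is correct and follows exactly the route indicated by the paper, which simply states that the theorem ``follows immediately from Corollary \ref{gamma p} and from Lemma \ref{multiplication}.'' You have merely filled in the routine details (the $G$-action on $J$ by ring automorphisms, the application of Lemma \ref{base} for the $\gfp$-linear bijection), which the paper leaves implicit.
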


\bigskip

\section{Open questions}

\medskip

The results of this paper should be compared with 
\cite[Chapter 14]{lusztig}, where the {\it quasi-split case} is 
considered: more particularly, see 
\cite[Lemmas 16.5, 16.6 and 16.14]{lusztig}. 
This leads to the following questions:

\medskip

\begin{itemize}
\item[$\bullet$] Does Theorem A (d) hold if $G$ is not solvable? It is 
probably the case, but a proof should rely on completely different 
arguments. For the statements (a), (b) and (c), see the remark 
after Theorem A in the introduction.

\item[$\bullet$] Let $z \in W^G$. Is it true that $\D(z) \le \D_G(z)$? 
See \cite[Lemma 16.5]{lusztig} for the quasi-split case.

\item[$\bullet$] Let $x$, $y \in W^G$ be such that $x \preg{\LC} y$. 
Is it true that $x \pre{\LC} y$? See \cite[16.13 (a)]{lusztig} 
for the quasi-split case.
\end{itemize}

\bigskip

\end{document}